\theoremstyle{thmstyle}
\newtheorem{Theorem}{Theorem}
\newtheorem{Proposition}[Theorem]{Proposition}
\newtheorem{Assumption}{Assumption}
\theoremstyle{remark}
\newtheorem{Remark}{Remark}
\def \ba{\begin{array}}
\def \ea{\end{array}}
\def \bea{\begin{eqnarray}}
\def \eea{\end{eqnarray}}
\def \be{\begin{equation}}
\def \ee{\end{equation}}
\def \BEA{\begin{eqnarray*}}
\def \EEA{\end{eqnarray*}}
\def \BE{\begin{equation*}}
\def \EE{\end{equation*}}
\def \+{\dagger}
\def \bb{\mathbb}
\def \disp{\displaystyle}
\def \tcol{\textcolor}
\def \S{\mathtt{S}}
\def \I{\mathtt{I}}
\def \D{\mathtt{D}}
\def \R{\mathtt{R}}
\def \U{\mathtt{U}}
\def \T{\mathtt{T}}
\definecolor{blue1}{RGB}{128, 191, 255}
\definecolor{redd1}{RGB}{255, 153, 128}
\definecolor{yelw1}{RGB}{255, 179, 26}
\definecolor{prpl1}{RGB}{204, 153, 255}
\definecolor{gren1}{RGB}{153, 230, 153}
\definecolor{blue2}{RGB}{51   153  255}
\definecolor{redd2}{RGB}{255  71   26}
\definecolor{yelw2}{RGB}{255  204  0}
\definecolor{prpl2}{RGB}{153  51   255}
\definecolor{gren2}{RGB}{51   204  51}
\definecolor{mygreen}{rgb}{0,0.7,0}
\begin{document}

\title{Effective Testing Policies for Controlling an Epidemic Outbreak}

\author{Muhammad Umar B. Niazi \quad Alain Kibangou \quad Carlos Canudas-de-Wit \\ Denis Nikitin \quad Liudmila Tumash \quad Pierre-Alexandre Bliman
\thanks{M.U.B. Niazi, C. Canudas-de-Wit, A. Kibangou, D. Nikitin, and L. Tumash are with Universit\'e Grenoble Alpes, Inria, CNRS, Grenoble INP, GIPSA-Lab, 38000 Grenoble, France.
Emails in the respective order: \href{mailto:muhammad-umar-b.niazi@gipsa-lab.fr}{muhammad-umar-b.niazi@gipsa-lab.fr}, \href{mailto:carlos.canudas-de-wit@gipsa-lab.fr}{carlos.canudas-de-wit@gipsa-lab.fr}, \href{mailto:alain.kibangou@univ-grenoble-alpes.fr}{alain.kibangou@univ-grenoble-alpes.fr}, \href{mailto:denis.nikitin@gipsa-lab.fr}{denis.nikitin@gipsa-lab.fr}, \href{mailto:liudmila.tumash@gipsa-lab.fr}{liudmila.tumash@gipsa-lab.fr}}
\thanks{P.-A. Bliman is with Sorbonne Universit\'{e}, Universit\'{e} Paris-Diderot SPC, Inria, CNRS, Laboratoire Jacques-Louis Lions, \'{e}quipe Mamba, 75005 Paris, France.
Email: \href{mailto:pierre-alexandre.bliman@inria.fr}{pierre-alexandre.bliman@inria.fr}}
\thanks{
This work is partially supported by European Research Council (ERC) under the European Union's Horizon 2020 research and innovation programme (ERCAdG no. 694209, Scale-FreeBack, website: \url{http://scale-freeback.eu/}) and by Inria, France, in the framework of the Inria Mission Covid-19 (Project HealthyMobility).}
}

\maketitle

\begin{abstract}
Testing is a crucial control mechanism for an epidemic outbreak because it enables the health authority to detect and isolate the infected cases, thereby limiting the disease transmission to susceptible people, when no effective treatment or vaccine is available. In this paper, an epidemic model that incorporates the testing rate as a control input is presented. The proposed model distinguishes between the undetected infected and the detected infected cases with the latter assumed to be isolated from the disease spreading process in the population. Two testing policies, effective during the onset of an epidemic when no treatment or vaccine is available, are devised: (i) best-effort strategy for testing (BEST) and (ii) constant optimal strategy for testing (COST). The BEST is a suppression policy that provides a lower bound on the testing rate to stop the growth of the epidemic. The COST is a mitigation policy that minimizes the peak of the epidemic by providing a constant, optimal allocation of tests in a certain time interval when the total stockpile of tests is limited. Both testing policies are evaluated by their impact on the number of active intensive care unit (ICU) cases and the cumulative number of deaths due to COVID-19 in France.
\end{abstract}

\section{Introduction}
On March 16, 2020, five days after the World Health Organization (WHO) declared COVID-19 to be a pandemic, the Director-General of WHO, Dr. T. A. Ghebreyesus, ended his media briefing with a simple message to all countries: ``Test, test, test". This is because testing is well-known to be a crucial control mechanism for epidemics when no effective treatment or vaccine is available \cite{chowell2003}. It limits the spread of disease to the susceptible population by enabling the health authority to detect and isolate the infected people.

Recently, several models and techniques have been developed to study testing policies for epidemic control. Similar to a resource allocation problem in epidemic control \cite{nowzari2017}, the optimal test allocation problem is posed in \cite{pezzutto2020} as a well-known sensor selection problem in control theory. A similar problem studied in \cite{ely2020} considers the specificity and sensitivity of tests and solves the optimal test allocation problem as a welfare maximization problem. However, the underlying assumption in these papers is the availability of information portfolios of all the people in a society, thus limiting their applicability to larger scales.
Another line of research studies testing policies from an economic perspective, for instance, \cite{piguillem2020} and \cite{berger2020} aim to find an optimal trade-off between the economic activity and epidemic mitigation. In \cite{charpentier2020}, an optimal trade-off between testing effort and lockdown intervention is computed numerically under the constraint of limited Intensive Care Units (ICU). However, none of these papers provide a control-theoretic perspective of testing, namely how testing can be used to change the dynamics of an epidemic?

In this paper, we study testing policies in terms of testing rate, i.e., number of tests performed per day, with the aim to suppress and/or mitigate an epidemic outbreak. We introduce a new model with the acronym SIDUR---susceptible (S), undetected infected (I), detected infected (D), unidentified recovered (U), and identified removed (R)---that has the testing rate as a control input. The influence of the control input in the SIDUR model is directly linked with the testing specificity, which determines the probability of detecting an infected person by a test. %The number of tests required per day to control the epidemic spread is higher when the testing specificity is low, which, for instance, can be increased through efficient contact tracing.
Similar to \cite{giordano2020,liu2020,ducrot2020}, we distinguish the undetected infected from the detected infected population. Only the undetected infected people participate in the disease transmission, and the detected infected are assumed to be either quarantined and/or hospitalized. Moreover, the unidentified recovered people, who were infected and then recovered naturally without getting detected, are distinguished from the identified removed people, who were infected and then recovered or died in a hospital or quarantine after getting detected.

We propose two testing policies for epidemic control: (1)~Best-effort strategy for testing (BEST) and (2)~Constant optimal strategy for testing (COST). 
The BEST is a suppression policy for an epidemic that computes the minimum testing rate to immediately stop the growth of the epidemic when applied from a certain day onward. The COST is a mitigation policy for an epidemic that reduces, but not necessarily stops, the growth of the epidemic and is meaningful only when the total stockpile of tests is limited. It provides the constant optimal testing rate that must be utilized until the day when the stockpile of tests finishes. We evaluate both testing policies by predicting their impact on the number of active intensive care unit (ICU) cases and the cumulative number of deaths due to COVID-19 in France. Both testing policies are effective at the beginning of an epidemic outbreak when the treatments or vaccines against the disease are not available.

This paper focuses solely on the SIDUR model design and testing policies, and excludes model estimation and validation, which is reported in the detailed version \cite{niazi2020}. The paper is organized as follows: Section~\ref{sec:model} presents the SIDUR model, identifies the measured outputs in relation to the model states, and provides a partial solution of the model. Then, in Section~\ref{sec:BEST} and \ref{sec:COST}, we propose the BEST and COST policies, and evaluate them for the case of COVID-19 outbreak in France from February to July 2020. Finally, Section~\ref{sec:conclusion} presents the concluding remarks.

\section{Model Design} \label{sec:model}

\subsection{Model Equations}
We introduce a five-compartment model, SIDUR, with the purpose of devising and evaluating testing policies. We consider the timescale of days and, at time $t\in\bb{R}_{\geq 0}$, each compartment is characterized by a single state representing its population, where
\begin{itemize}
    \item $x_\S(t)$: Number of {\em susceptible} people who are prone to get infected by the disease. 
    \item $x_\I(t)$: Number of {\em undetected infected} people who are infected but not diagnosed by the health authority.
    \item $x_\D(t)$: Number of {\em detected infected} people who are infected and are diagnosed through a test.
    \item $x_\U(t)$: Number of {\em unidentified recovered} people who were infected and then recovered naturally without getting diagnosed.
    \item $x_\R(t)$: Number of {\em identified removed} people who were infected, diagnosed, and then either recovered or died.
\end{itemize}

\begin{Assumption} \label{assumption}
\begin{enumerate}[(i)]
\item The total population $N$ remains constant during the evolution of the epidemic
\[
x_\S(t) + x_\I(t) + x_\D(t) + x_\U(t) + x_\R(t) = N, \quad \forall\,t\in\bb{R}_{\geq 0}.
\]
\item Only the undetected infected $x_\I(t)$ are responsible for the disease transmission to the susceptible people $x_\S(t)$.
\item All the deaths occurring due to the epidemic are known, reported, and included in the identified removed $x_\R(t)$.
\item After recovery, people acquire permanent immunity and are not infected again.
\end{enumerate}
\end{Assumption}

Let $\beta\geq 0$ be the {\em infection rate} defined as the product of the contact rate by the probability of disease transmission during a contact between a susceptible and an infected persons. The value of $\beta$ not only depends on the disease biology but is also influenced by the implementation of non-pharmaceutical interventions (NPI) such as social distancing, lockdown, travel restrictions, and preventive measures. Thus, $\beta=\beta(t)$ is a time-varying parameter that depends on the extent to which the NPIs are implemented. Depending on the time periods when different NPI policies are implemented, we assume $\beta$ to be piece-wise constant.

Let $x_\T(t)$ be the {\em testable population} defined as
\[
\ba{ccl}
x_\T(t) &:=& x_\I(t) + (1-\theta) \left(x_\S(t) + x_\U(t)\right) 
%\\
%&=& \theta x_\I(t) + (1-\theta) \left(N- x_\D(t) - x_\R(t)\right)
\ea
\]
where $\theta\in[0,1]$ is the {\em testing specificity parameter}. Note that the infected population $x_\I(t)\leq x_\T(t)$, for every $t$, where the equality holds if $\theta=1$. Given $\theta\in[0,1]$, the probability of detecting an infected person per test in a homogeneous population structure is $x_\I(t)/x_\T(t)$. Thus, if $\theta = 1$, the probability of detection is equal to one, and if $\theta = 0$, the probability of detection is the lowest. Depending on the testing policy, testing specificity, and the level of contact tracing, we consider $\theta=\theta(t)$ to be time-varying and piece-wise constant.

Finally, let $\gamma>0$ and $\rho>0$ be the {\em recovery rate} and {\em removal rate}, respectively, which are assumed to be constants that depend on the disease biology. The inverse of the recovery rate $1/\gamma$ is defined to be the average recovery time after which a typical undetected infected person naturally recovers, whereas the inverse of the removal rate $1/\rho$ is defined to be the average removal time after which a typical detected infected person recovers or dies.

The SIDUR model, depicted in Fig.~\ref{fig:SIDUR_block}, is given by
\begin{subequations}
	\label{eq401}%%
	\begin{eqnarray}
	\label{eq401a}%%
	\dot{x}_\S(t) &=& \disp - \beta \; x_\S(t) \frac{x_\I(t)}{N} \\ [0.25em]
	\label{eq401b}%%
	\dot{x}_\I(t) &=& \disp \beta \; x_\S(t) \frac{x_\I(t)}{N} - u(t) \frac{x_\I(t)}{x_\T(t)} -\gamma x_\I(t) \\ 
	\label{eq401c}%%
	\dot{x}_\D(t) &=& u(t) \frac{x_\I(t)}{x_\T(t)} - \rho x_\D(t) \\ [0.5em]
	\label{eq401d}%%
	\dot{x}_\U(t) &=&\gamma x_\I(t)\\ [1em]
	\label{eq401e}%%
	\dot{x}_\R(t) &=&\rho x_\D(t)
	\end{eqnarray}
\end{subequations}
where $u(t)\geq 0$ is the testing rate. Notice that the model \eqref{eq401} fulfils Assumption~\ref{assumption} for any $u(t)\geq 0$.

\begin{figure}[t]
\begin{center}
\begin{tikzpicture}[scale=0.6]

%    \draw[dashed] (3.5,-1) rectangle (11.5,1.5);
%    \node at (9,-0.7) {Measured compartments};
    
	\draw[blue1, fill=blue1, very thick, rounded corners, opacity = 0.5] (-1.5,2.75) rectangle (1.5,4.25);
	\node at (0,3.5) {\tcol{redd2}{\bf S}usceptible};
		
	\draw[-latex, very thick] (1.52,3.5) -- (3.48,3.5);
	\node[text width=2.5cm, anchor=south, align=center] at (2.5,3.75) {\small Infection \\ [-1.25mm] rate};
	
	\draw[redd1, fill=redd1, very thick, rounded corners, opacity = 0.5] (3.5,2.75) rectangle (6.5,4.25);
	\node[text width=2cm, align=center] at (5,3.5) {Undetected \tcol{redd2}{\bf I}nfected};
		
	\draw[-latex, dotted, very thick] (5,2.73) -- (5,1.27);
	\node[text width=2.5cm, anchor=east, align=center] at (6.25,2) {\small Testing \\ [-1.25mm] rate};
	
	\draw[redd1, fill=redd1, very thick, rounded corners, opacity = 0.5] (3.5,-0.25) rectangle (6.5,1.25);
	\node[text width=2cm, align=center] at (5,0.5) {\tcol{redd2}{\bf D}etected Infected};

	\draw[-latex, very thick] (6.52,3.5) -- (8.48,3.5);
	\node[text width=2.5cm, anchor=south, align=center] at (7.5,3.75) {\small Recovery \\ [-1.25mm] rate};
	
	\draw[gren1, fill=gren1, very thick, rounded corners, opacity = 0.5] (8.5,2.75) rectangle (11.5,4.25);
	\node[text width=2cm, align=center] at (10,3.5) {\tcol{redd2}{\bf U}nidentified recovered};
	
	\draw[-latex, very thick] (6.52,0.5) -- (8.48,0.5);
	\node[text width=2.5cm, anchor=south, align=center] at (7.5,0.75) {\small Removal \\ [-1.25mm] rate};
	
	\draw[gren1, fill=gren1, very thick, rounded corners, opacity = 0.5] (8.5,-0.25) rectangle (11.5,1.25);
	\node[text width=2cm, align=center] at (10,0.5) {Identified \tcol{redd2}{\bf R}emoved};
\end{tikzpicture}
\caption{Block diagram of SIDUR model.}
\label{fig:SIDUR_block}
\end{center}
\end{figure}
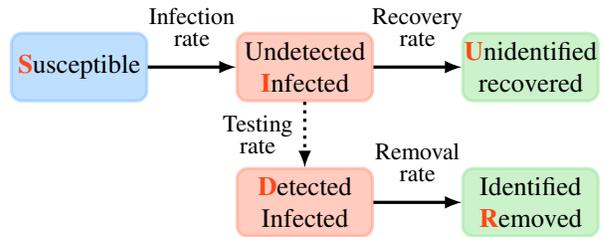

\subsection{Control Input and Measured Outputs}
The testing rate $u(t)$, which is defined to be the number of tests performed per day, is the control input in SIDUR model. 
In order to diagnose the infected people at time $t$, the tests are allocated to a proportion of the testable population $x_\T(t)$. The testing specificity parameter $\theta$ allows for the adjustment of the testable population to accommodate for the detection rate of tests. 

The data reported  by the health authorities are chosen as the measured outputs of the SIDUR model. They are five measurements, related directly to the states as follows:
\begin{itemize}
    \item Cumulative number of detected people
    \begin{equation}\label{total_infected_cases}
        y_1(t)= x_\D(t) + x_\R(t).
    \end{equation}
    \item Cumulative number of removed people
    \begin{equation}\label{total_recovered_cases}
        y_2(t)= x_\R(t).
    \end{equation}
    \item Number of positively tested people per day
    \begin{equation}\label{testedpos_cases}
        y_3(t)= u(t) \frac{x_\I(t)}{x_\T(t)}.
    \end{equation}
\end{itemize}
%Note that these model outputs are related to each other. Since the number of diagnosed infected people at any time $t$ can be obtained as $x_\D(t) = y_1(t) - y_2(t)$, we obtain the following relation between $y_1(t)$ and $y_2(t)$ from \eqref{eq401e}
%\be \label{y2y1_relation}
%\dot{y}_2(t) = \rho (y_1(t) - y_2(t)).
%\ee
%On the other hand, the number of positive test results per day $y_3(t)$ is related to the cumulative number of diagnosed cases $y_1(t)$ by the following relation
%\be \label{output_relation}
%y_3(t) = \dot{x}_\D(t) + \dot{x}_\R(t) = \dot{y}_1(t).
%\ee
%The cumulative number of diagnosed people $y_1(t)$ can be obtained by
%\be \label{output_relation2}
%y_1(t)-y_1(0) = \int_0^t y_3(\eta) d\eta.
%\ee

Let the number of active ICU patients (or ICU beds occupied) be denoted as $B(t)$ and the cumulative number of deaths (or extinct cases) be denoted $E(t)$. Note that $B(t)$ and $E(t)$ are also measured and reported, and are functions of the number of active infected cases and the cumulative number of infected cases, respectively.
\begin{itemize}
    \item Number of active ICU cases
    \be \label{activeICU_cases}
    y_4(t) := B(t) = g(A(t-\psi^{-1}))
    \ee
    where $A(t) = x_\I(t)+x_\D(t)$ is the number of active infected cases, $\psi^{-1}$ is the average time period which a typical infected ICU case takes from getting infected to being admitted to the ICU, and the function $g$ is to be chosen such that $y_4(t) = B(t)$ fits the data.
    \item Cumulative number of deaths
    \be \label{total_dead_cases}
    y_5(t) := E(t) = h(I(t-\phi^{-1}))
    \ee
    where $I(t)=N-x_\S(t)$ is the cumulative number of infected cases, $\phi^{-1}$ is the average time period which a typical non-surviving case takes from getting infected to death, and the function $h$ is to be chosen such that $y_5(t)=E(t)$ fits the data.
\end{itemize}

\subsection{Reproduction Number}
An important quantity to determine the epidemic potential at the onset of a disease is the basic reproduction number $R_0$. In the later stages, however, the effective reproduction number $R(t)$ is more suitable \cite{hethcote2000,rothman2008} to predict the spread of the disease.
The $R(t)$ for the SIDUR model is defined as the ratio of the number of newly infected people over the number of newly detected and recovered people at time $t$, i.e.,
\[
R(t) = \frac{\beta(t) x_\S(t) \frac{x_\I(t)}{N}}{u(t) \frac{x_\I(t)}{x_\T(t)} + \gamma x_\I(t)} = \frac{\beta(t)}{\frac{u(t)}{x_\T(t)} + \gamma} \frac{x_\S(t)}{N}.
\]
If $R(t)<1$, then more people are being detected and/or recovering than the people being infected at time $t$, implying that $x_\I(t)$ will decrease in the near future. If $R(t)>1$, then more people are being infected than the people being detected and recovering at time $t$, implying that $x_\I(t)$ will increase in the near future. The basic reproduction number $R_0=R(0)$ and is given by
\[
R_0 \approx \frac{\beta(0)}{\frac{u(0)}{(1-\theta) N} + \gamma}
\]
where we assume that $x_\S(0)\approx N$ and $x_\T(0)\approx (1-\theta)N$.
This expression of $R_0$ can also be obtained by following the methodology of \cite{driessche2008}.

\subsection{Partial Solution of SIDUR Model}

Consider the {\em infection time} $\xi$ defined as $d\xi = x_\I(t) dt$, where
\be \label{eq:xi}
\xi(t) = \int_0^t x_\I(\eta) d\eta
\ee
and
\be \label{eq:t(xi)}
t(\xi) = \int_0^\xi \frac{1}{x_\I(\eta)} d\eta
\ee
with $\xi = 0$ if and only if $x_\I \equiv 0$ on $[0,t]$.
Moreover,
\[
\frac{dx_\I}{dt} (t) = 0 \quad \Leftrightarrow \quad \frac{dx_\I}{d\xi} (\xi) = 0
\]
because $dx_\I/dt = (dx_\I/d\xi)(d\xi/dt) = (dx_\I/d\xi) x_\I$.
Thus, at the time $t_p$ of the infection peak, we have
\[
\frac{dx_\I}{dt}\bigg|_{t=t_p} = 0 \quad \Leftrightarrow \quad \frac{dx_\I}{d\xi}\bigg|_{\xi=\xi_p} = 0
\]
where
\[
\xi_p = \int_0^{t_p} x_\I(\eta) d\eta.
\]

\begin{Proposition}
Consider the infection time $\xi$, then:
\be \label{soln:xS}
x_\S(\xi) = x_\S^0 e^{-\frac{\beta}{N}\xi}
\ee
\be \label{soln:xU}
x_\U(\xi) = x_\U^0 + \gamma \xi
\ee
where $x_\S^0 = x_\S(0)$ and $x_\U^0=x_\U(0)$.
\end{Proposition}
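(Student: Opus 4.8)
The plan is to push each of the two scalar equations \eqref{eq401a} and \eqref{eq401d} through the change of variables $t\mapsto\xi$ defined by \eqref{eq:xi} and then read off the answer by an elementary integration. The only thing requiring care is checking that this change of variables is legitimate, so I would dispose of that first.

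Equation \eqref{eq401b} has the form $\dot x_\I = a(t)\,x_\I$ with $a(t)=\beta x_\S(t)/N - u(t)/x_\T(t)-\gamma$, which is well defined and locally bounded along the solution whenever $x_\I>0$ (since then $x_\T\ge x_\I>0$). By uniqueness of solutions of this homogeneous, linear-in-$x_\I$ equation, $x_\I$ is either identically zero on $[0,\infty)$ or strictly positive there. In the first case \eqref{eq401a} and \eqref{eq401d} force $x_\S\equiv x_\S^0$ and $x_\U\equiv x_\U^0$, while $\xi\equiv 0$ by its definition, so \eqref{soln:xS}--\eqref{soln:xU} hold trivially. Hence I may assume $x_\I(t)>0$ for all $t\ge 0$; then $\xi(\cdot)$ in \eqref{eq:xi} is $C^1$, strictly increasing, and therefore a bijection of $[0,\infty)$ onto its image, with $C^1$ inverse $t(\xi)$ given by \eqref{eq:t(xi)}.

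With the change of variables in hand, for any differentiable state $x$ one has $\dot x = (dx/d\xi)(d\xi/dt) = (dx/d\xi)\,x_\I$, i.e.\ $dx/d\xi = x_\I^{-1}\dot x$. Applying this to \eqref{eq401a} gives $dx_\S/d\xi = x_\I^{-1}\big({-\beta x_\S x_\I/N}\big) = -(\beta/N)\,x_\S$; solving this linear ODE in $\xi$ with initial value $x_\S^0$ yields \eqref{soln:xS}. Applying it to \eqref{eq401d} gives $dx_\U/d\xi = x_\I^{-1}(\gamma x_\I)=\gamma$, and integrating from $0$ to $\xi$ gives \eqref{soln:xU}.

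I expect the main obstacle to be precisely the bookkeeping around $x_\I$ possibly vanishing and the attendant well-definedness of $\xi$ as a new time variable — everything past that point is a one-line integration. A minor extra point: if $\beta$ is taken piece-wise constant (as the model allows) rather than constant, the same computation on each interval of constancy replaces the exponent in \eqref{soln:xS} by $-\frac1N\int_0^\xi\beta\,d\xi'$, which collapses to the stated formula when $\beta$ is constant.
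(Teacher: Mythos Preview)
Your proof is correct and follows essentially the same approach as the paper: rewrite \eqref{eq401a} and \eqref{eq401d} in the infection-time variable $\xi$ via $dx/d\xi = x_\I^{-1}\dot x$, obtain the linear ODE $dx_\S/d\xi=-(\beta/N)x_\S$ and the trivial ODE $dx_\U/d\xi=\gamma$, and integrate. Your additional care about the well-definedness of the change of variables (the $x_\I\equiv 0$ case and the monotonicity of $\xi$) and the remark about piecewise-constant $\beta$ go beyond what the paper provides, but the core argument is identical.
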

\begin{proof}
First, from \eqref{eq401a}, we obtain
\[
\frac{dx_\S}{d\xi} = -\frac{\beta}{N} x_\S(\xi)
\]
whose solution is given by \eqref{soln:xS}. Then, from \eqref{eq401d}, we obtain
\[
\frac{dx_\U}{d\xi} = \gamma
\]
whose solution is given by \eqref{soln:xU}.
\end{proof}

%Notice that the detected infected $x_\D(t) = y_1(t)-y_2(t)$ and the identified removed $x_\R(t) = y_2(t)$. 

The testing specificity parameter $\theta$ can be close to one under different circumstances. For instance, if the testing is assisted with efficient contact tracing, then $\theta\approx 1$. However, if the tests are performed only on people with apparent symptoms of the disease, then $\theta\approx 1$ also. If we assume a scenario unlike those mentioned above, then $\theta \ll 1$. In addition, if we assume that $x_\S(t)+x_\U(t) \approx N$, then we can obtain an approximate solution of $x_\I(\xi)$ by assuming $x_\T(t) \approx (1-\theta)N$.

\begin{Proposition}
Assume the testable population $x_\T(t) \approx (1-\theta) N$.
Then, an approximate solution of the undetected infected population with respect to the infection time $\xi$ is
\be \label{soln:xI}
x_\I(\xi) = \disp x_\I^0 + x_\S^0 ( 1- e^{-\frac{\beta}{N}\xi}) - \gamma \xi  - \int_0^\xi \frac{u(\eta)}{(1-\theta)N} d\eta.
\ee
where $x_\I^0 = x_\I(0)$.
\end{Proposition}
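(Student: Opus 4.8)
The plan is to transform the differential equation \eqref{eq401b} for $x_\I$ from the physical time $t$ to the infection time $\xi$, substitute the closed form of $x_\S(\xi)$ obtained in the previous Proposition, apply the stated approximation $x_\T \approx (1-\theta)N$, and then perform a single elementary quadrature.

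First I would invoke the chain-rule identity already recorded in the excerpt, $dx_\I/dt = (dx_\I/d\xi)(d\xi/dt) = (dx_\I/d\xi)\,x_\I$, to divide \eqref{eq401b} through by $x_\I$ and obtain
\[
\frac{dx_\I}{d\xi} = \frac{\beta}{N}\,x_\S(\xi) - \frac{u(\xi)}{x_\T(\xi)} - \gamma ,
\]
an identity valid on any interval where $x_\I>0$; the degenerate case $x_\I\equiv 0$ is covered by the convention $\xi=0$ fixed just before the Proposition. Next I would insert $x_\S(\xi) = x_\S^0 e^{-\frac{\beta}{N}\xi}$ from \eqref{soln:xS} and replace $x_\T(\xi)$ by the approximation $(1-\theta)N$, yielding
\[
\frac{dx_\I}{d\xi} = \frac{\beta}{N}\,x_\S^0 e^{-\frac{\beta}{N}\xi} - \frac{u(\xi)}{(1-\theta)N} - \gamma .
\]
Integrating both sides from $0$ to $\xi$ and using $\int_0^\xi \frac{\beta}{N} x_\S^0 e^{-\frac{\beta}{N}\eta}\,d\eta = x_\S^0\bigl(1 - e^{-\frac{\beta}{N}\xi}\bigr)$ then produces exactly \eqref{soln:xI}.

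I do not expect a genuine obstacle here, since the argument is a change of variables followed by a direct integration. The one point deserving a word of care is the legitimacy of dividing by $x_\I$ in the chain-rule step, which is why the computation is carried out on the set where $x_\I>0$: on any subinterval where $x_\I$ vanishes the infection time $\xi$ does not advance, so both sides of \eqref{soln:xI} stay constant there and the identity extends by continuity. It is also worth stating plainly that \eqref{soln:xI} is \emph{approximate} only because of the substitution $x_\T\approx(1-\theta)N$; every other step in the derivation is exact, so the error in \eqref{soln:xI} is controlled by the error in that single approximation, which is small whenever $x_\S(t)+x_\U(t)\approx N$ as discussed before the statement.
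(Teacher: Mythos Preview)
Your proposal is correct and follows essentially the same route as the paper: the paper's proof also passes from \eqref{eq401b} to $dx_\I/d\xi = \frac{\beta}{N}x_\S^0 e^{-\frac{\beta}{N}\xi} - \gamma - \frac{u}{(1-\theta)N}$ using \eqref{soln:xS} and the approximation $x_\T=(1-\theta)N$, then integrates from $0$ to $\xi$. Your additional remarks on the legitimacy of dividing by $x_\I$ and on the source of the approximation error are sound refinements but do not change the underlying argument.
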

\begin{proof}
From \eqref{eq401b} and \eqref{soln:xS}, we have
\[
\frac{dx_\I}{d\xi} = \frac{\beta}{N} x_\S^0 e^{-\frac{\beta}{N}\xi} - \gamma - \frac{u}{(1-\theta)N}
\]
where we assumed $x_\T = (1-\theta)N$. Integrating both sides from $0$ to $\xi$ gives \eqref{soln:xI}.
\end{proof}

\section{Best-Effort Strategy for Testing} \label{sec:BEST}

Suppose that the tests can be produced and supplied easily, and there is no upper bound on the total stockpile of tests. Moreover, we say that, at any time $t$, the epidemic is {\em spreading} if $x_\I(t)$ is increasing, i.e., $R(t)>1$. On the other hand, we say that the epidemic is {\em non-spreading} if $x_\I(t)$ is not increasing, i.e., $R(t) \leq 1$. Then, the {\it best effort strategy for testing} (BEST) at any given time $t^*$ is the minimum number of tests to be performed per day from time $t^*$ onward such that the epidemic switches from spreading to non-spreading at $t^*$. In other words, the BEST policy provides the smallest lower bound on the testing rate that is sufficient to change, at a given time $t^*$, the course of the epidemic from spreading to non-spreading. We remark that the BEST policy is meaningful only when the epidemic is spreading.

\subsection{Computation of the BEST policy}

Define a function
\[
c^*(t)=x_\T(t)\left|\frac{ \beta(t)}{N}x_\S(t)-\gamma\right|_+
\]
where, by definition, for any scalar $z$, $|z|_+=z$ if $z > 0$ and $|z|_+=0$ otherwise. 
\begin{Proposition} \label{prop:BEST}
Assume that the infection rate $\beta$ is non-increasing while the testing specificity parameter $\theta$ is non-decreasing on a time interval $[t^*,t_1)$, for some $t^* < t_1$. Then, the BEST policy at time $t^*$ is given by 
\[
u(t)=c^*(t^*)=x_\T(t^*)\left|\frac{ \beta(t^*)}{N}x_\S(t^*)-\gamma\right|_+,\quad \forall t\in[t^*,t_1).
\]
\end{Proposition}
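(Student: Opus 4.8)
The strategy is to recast the non-spreading condition as an inequality involving $c^*$, reduce the claim to the bound $c^*(t)\le c^*(t^*)$ on $[t^*,t_1)$, and then establish that bound from the monotonicity hypotheses by an invariance argument. For the recasting: from the displayed expression for $R(t)$ — equivalently, directly from \eqref{eq401b}, since $\dot x_\I=x_\I\big(\tfrac{\beta}{N}x_\S-\gamma-\tfrac{u}{x_\T}\big)$ with $x_\I>0$ and $x_\T>0$ on $[t^*,t_1)$ whenever the epidemic is active at $t^*$ — one obtains, for any $u\ge 0$,
\[
R(t)\le 1\ \Leftrightarrow\ \frac{u(t)}{x_\T(t)}\ge\frac{\beta(t)}{N}x_\S(t)-\gamma\ \Leftrightarrow\ u(t)\ge c^*(t),
\]
the second equivalence holding because $c^*(t)$ equals $x_\T(t)\big(\tfrac{\beta(t)}{N}x_\S(t)-\gamma\big)$ when this is positive and equals $0\le u(t)$ otherwise. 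In particular $u(t^*)=c^*(t^*)$ makes the epidemic non-spreading at $t^*$; and since testing from $t^*$ onward does not affect the state at $t^*$, no constant rate below $c^*(t^*)$ can do so when the epidemic is spreading there (so that $c^*(t^*)>0$) — this is the minimality part. It remains to show that with $u\equiv c^*(t^*)$ on $[t^*,t_1)$ one has $R(t)\le 1$ throughout, equivalently $c^*(t)\le c^*(t^*)$ on $[t^*,t_1)$; this is immediate if $c^*(t^*)=0$, since then $\tfrac{\beta}{N}x_\S-\gamma\le 0$ at $t^*$ and, by the first fact below, stays $\le 0$.

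Two monotonicity facts carry the argument. First, set $f(t):=\tfrac{\beta(t)}{N}x_\S(t)-\gamma$: by \eqref{eq401a} $x_\S$ is non-increasing, $\beta$ is non-increasing on $[t^*,t_1)$ by hypothesis, both non-negative, so $f$ — hence $|f|_+$ — is non-increasing on $[t^*,t_1)$; moreover, off the jumps of $\beta$, $\dot f=\tfrac{\beta}{N}\dot x_\S=-\tfrac{\beta^2 x_\S x_\I}{N^2}<0$ as long as $f(t)>0$ (which forces $\beta,x_\S>0$) and $x_\I>0$. Second, along the closed-loop trajectory, combining \eqref{eq401a}, \eqref{eq401b}, \eqref{eq401d} gives, off the jumps of $\theta$,
\[
\dot x_\T=\dot x_\I+(1-\theta)(\dot x_\S+\dot x_\U)=x_\I\Big(\theta f-\frac{u}{x_\T}\Big);
\]
so whenever $R(t)\le 1$, i.e. $\tfrac{u}{x_\T}\ge f$, and $0\le\theta\le 1$, we have $\theta f\le\max\{f,0\}\le\tfrac{u}{x_\T}$, hence $\dot x_\T\le 0$, while at each (upward) jump of $\theta$ the value $x_\T$ drops by $\Delta\theta\,(x_\S+x_\U)\ge 0$. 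Consequently, on any sub-interval $[t^*,s]\subseteq[t^*,t_1)$ on which $R\le 1$, both $x_\T$ and hence $c^*=x_\T|f|_+$ are non-increasing.

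The delicate point, and the main obstacle, is the apparent circularity — ``$R\le 1$'' yields ``$x_\T$ non-increasing'', which yields ``$c^*\le c^*(t^*)$'', which yields ``$R\le 1$'' — and I would break it with a maximal-interval argument, noting first that $c^*$ has only downward jumps (through $|f|_+$ at jumps of $\beta$, through $x_\T$ at jumps of $\theta$). Suppose the set $\{t\in[t^*,t_1): c^*(t)>c^*(t^*)\}$ were non-empty, and let $\tau$ be its infimum; then $c^*\le c^*(t^*)$ on $[t^*,\tau)$, so $R\le 1$ there, so by the second fact $x_\T$ is non-increasing on $[t^*,\tau]$ and $c^*(\tau)=x_\T(\tau)|f(\tau)|_+\le x_\T(t^*)|f(t^*)|_+=c^*(t^*)$. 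If $c^*(\tau)<c^*(t^*)$, then by continuity of $c^*$ on the piece starting at $\tau$ we would have $c^*<c^*(t^*)$ on a right-neighbourhood of $\tau$, contradicting that $\tau$ is the infimum; hence $c^*(\tau)=c^*(t^*)>0$, so $f(\tau)>0$ and $R(\tau)=1$, i.e. $\tfrac{u}{x_\T(\tau)}=f(\tau)$. Then $\dot x_\T(\tau^+)=x_\I(\tau)\,(\theta(\tau)-1)\,f(\tau)\le 0$, and since $f(\tau)>0$ the right derivative of $c^*=x_\T f$ at $\tau$ equals $\dot x_\T(\tau)f(\tau)+x_\T(\tau)\dot f(\tau)<0$ because $\dot f(\tau)<0$ (first fact); so $c^*<c^*(t^*)$ just to the right of $\tau$, again contradicting the choice of $\tau$. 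Hence that set is empty, $c^*(t)\le c^*(t^*)=u(t)$ on $[t^*,t_1)$, and $R(t)\le 1$ throughout, as claimed. (Throughout, $x_\I>0$ and $x_\T>0$ on $[t^*,t_1)$ when the epidemic is active at $t^*$, so all divisions are legitimate.) Steps one and two are routine algebra; the care needed for the piecewise-constant jumps of $\beta$ and $\theta$ in the last step is where the real work lies.
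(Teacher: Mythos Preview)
Your argument is correct and follows the same overall strategy as the paper's proof: both recast non-spreading as $u\ge c^*(t)$, then use a maximal-interval (continuation) argument to show that, under $u\equiv c^*(t^*)$, the function $c^*$ cannot exceed $c^*(t^*)$ on $[t^*,t_1)$. Two tactical differences are worth noting. First, the paper deduces that $x_\T$ decreases from the fact that $x_\I$ decreases (under the strict inequality $u>c^*(t^*)$) together with $\theta$ non-decreasing, without discussing the term $(1-\theta)(x_\S+x_\U)$; your explicit computation $\dot x_\T=x_\I\big(\theta f-u/x_\T\big)$ and the bound $\theta f\le\max\{f,0\}\le u/x_\T$ under $R\le1$ closes that gap cleanly. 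Second, the paper treats the borderline case $u=c^*(t^*)$ by first proving the strict case $u>c^*(t^*)$ and then invoking continuous dependence of ODE solutions on perturbations of the right-hand side, whereas you handle equality directly by computing the right derivative of $c^*$ at the putative escape time $\tau$ and using $\dot f<0$ there; your route is more self-contained and also makes the piecewise-constant jumps of $\beta,\theta$ explicit.
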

\begin{proof}
\mbox{}
In order to prove that $c^*(t^*)$ for $t\in [t^*,t_1)$ is the BEST at $t^*$,  we show the following:
\begin{enumerate}[(i)]
    \item If $u(t) > c^*(t)$ (resp., $u(t) \geq c^*(t)$) for any $t\in [t^*,t_1)$, then $x_\I$ is decreasing (resp., non-increasing) on $[t^*,t_1)$.
    \item If $u(t) > c^*(t^*)$ (resp., $u(t) \geq c^*(t^*)$) for any $t\in [t^*,t_1)$, then $x_\I$ is decreasing (resp., non-increasing) on $[t^*,t_1)$.
\end{enumerate}

Assume that $u(t) > c^*(t)$ on $[t^*,t_1)$.
Then, 
$
\Phi(t) := \beta(t) \frac{x_\S(t)}{N} -\frac{u(t)}{x_\T(t)} - \gamma < 0
$
which implies that $x_\I$ is decreasing because $\dot x_\I(t) = \Phi(t) x_\I(t)$ almost everywhere. 
If only the weaker assumption $u(t) \geq c^*(t)$ on $[t^*,t_1)$ is fulfilled, then, by using the continuity of the solutions of ODE with respect to perturbations of the right-hand side, one gets that $x_\I$ is non-increasing.

Assume now that $u(t) > c^*(t^*)$ on $[t^*,t_1)$, where $c^*(t^*)$ is constant. Then, by continuity, $u(t) > c^*(t)$ on a certain interval $[t^*,t_2)$, for some $t_2 \in (t^*,t_1]$. As a consequence of the result (i) shown previously, $x_\I$ decreases on $[t^*,t_2)$. Moreover, assume that $t_2$ is the maximal point in $(t^*,t_1]$ having this property. In order to show that $t_2=t_1$, it is sufficient to show that $u(t_2)> c^*(t_2)$, otherwise one may consider a larger value for $t_2$ which will lead to a contradiction with the fact that it is maximal. Since $x_\I$ decreases on $[t^*,t_2)$ and $\theta$ is non decreasing, we can conclude that $x_\T$ also decreases on this interval. On the other hand, since $x_\S$ is always decreasing and $\beta$ is non increasing, one can conclude that $c^*(t)$ also decreases on $[t^*,t_2)$. This is obtained by upper bounding $c^*(t)$. Thus, one has $c^*(t^*) > c^*(t)$, which implies that that $u(t_2)>c^*(t_2)$.
Therefore, as $t_2=t_1$, we have established that $x_\I$ decreases on the whole interval $[t^*,t_1)$.
For the case where $u(t) \geq c^*(t^*)$, we can use the same argument of continuity of the trajectories.

From the previous results, one deduces that the BEST policy is given by $c^*(t^*)$ and the testing rate $u(t) \geq c^*(t^*)$ for $t\in[t^*,t_1)$.
If $u(t)<c^*(t^*)$, for $t\in[t^*,t_1)$, then one can show that the epidemic goes on spreading in the interval $[t^*,t_1)$. Hence, $u(t)=c^*(t^*)$ is the BEST policy at $t^*$. 
\end{proof}

\begin{Remark}
Requiring that $\beta$ must not increase and $\theta$ must not decrease in the interval $[t^*,t_1)$ for some $t_1>t^*$ is necessary for the BEST policy. 
It is thus important to keep the external conditions that determine the values of $\beta$ and $\theta$ either constant or such that $\beta$ decreases (e.g., through the implementation of lockdown) and/or $\theta$ increases (e.g., through efficient contact tracing). On the other hand, the case where $\beta$ decreases and/or $\theta$ increases at some time $t_1>t^*$ has the effect of speeding up the suppression of the epidemic under BEST policy. \hfill $\lrcorner$
\end{Remark}

\subsection{BEST Policy for the COVID-19 Case of France}

Given the COVID-19 data of France, we first estimate the parameters of the SIDUR model by fitting the model outputs \eqref{total_infected_cases}, \eqref{total_recovered_cases}, and \eqref{testedpos_cases} with the corresponding data (see \cite{niazi2020} for details). Then, we compute $c^*(t^*)$ for different values of $t^*$ from January~24 to March~15. Figure~\ref{fig:with-BEST-2} shows the testing rate $u(t^*)$ required by the BEST policy if it is implemented from day $k=\lfloor t^* \rfloor$, where $\lfloor\cdot\rfloor$ denotes the floor function, and the corresponding value of the peak of undetected infected $x_\I(t)$. It can be seen that the required testing rate increases exponentially if the BEST policy is not implemented early on in the epidemic phase. This is because the number of undetected infected $x_\I(t)$ becomes very large, which increases the difficulty of controlling the epidemic.
 
\begin{figure}[!t]
    \centering
    \includegraphics[width=0.4\textwidth]{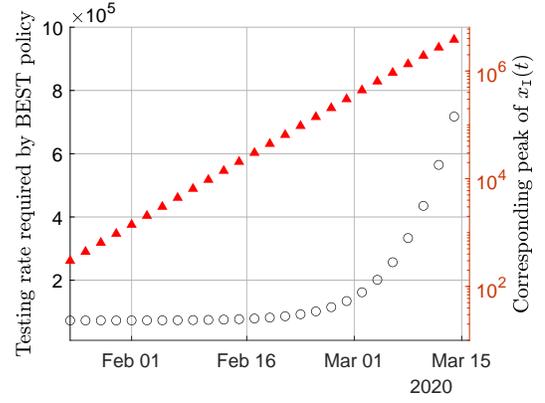}
    \caption{Testing rate $u(t)$ required by the BEST policy (left y-axis, black) and the predicted value of the corresponding epidemic peak (right y-axis, red, in logscale) with respect to time.}
    \label{fig:with-BEST-2}
\end{figure}
\begin{figure}[!t]
    \centering
    \includegraphics[width=0.4\textwidth]{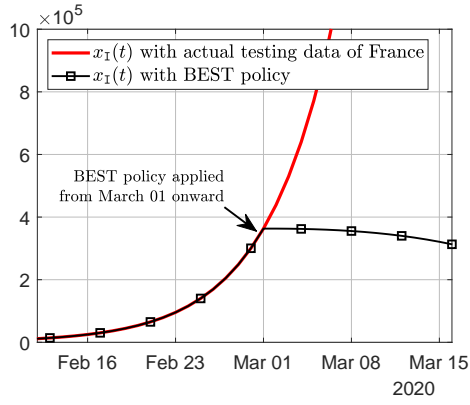}
    \caption{Predicted number of infected cases $x_\I(t)$: actual testing scenario vs. BEST}
    \label{fig:with-BEST}
\end{figure}

Let us now consider a scenario where BEST is implemented on March~01, 2020. Then, Figure~\ref{fig:with-BEST} depicts the evolution of undetected infected $x_\I(t)$ when $u(t)$ is given by the data on the number of tests per day in France and when $u(t)$ is given by BEST policy. To evaluate BEST, we use $u(t)$ as given by the data until February~29 and then by the BEST policy from March~01 onward. In the actual COVID-19 case of France, our model predicted the peak of the undetected infected $x_\I(t)$ to be about $6$ million. However, if the BEST policy was implemented, the peak of $x_\I(t)$ could have been reduced to $363,169$ with the testing rate $c^*\approx 147,000$, which is quite feasible. The impact of the BEST policy on the number of active ICU cases $B(t)$ and the cumulative number of deaths $E(t)$ is evaluated after fitting the model outputs \eqref{activeICU_cases} and \eqref{total_dead_cases} with the data in the actual testing scenario. The results are illustrated in Figure~\ref{fig:ICU-BEST} and \ref{fig:Death-BEST}.
From the figures, we observe that the peak of the number of active ICU patients could have been reduced by $34.71\%$ and the number of deaths could have been reduced by $74.45\%$ if the BEST policy was implemented.

\begin{figure}[!t]
    \centering
    \includegraphics[width=0.4\textwidth]{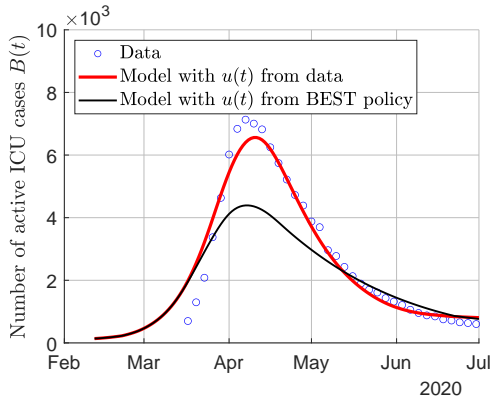}
    \caption{The prediction of the number of active ICU cases $B(t)$: actual scenario vs. BEST policy.}
    \label{fig:ICU-BEST}
\end{figure}
\begin{figure}[!t]
    \centering
    \includegraphics[width=0.4\textwidth]{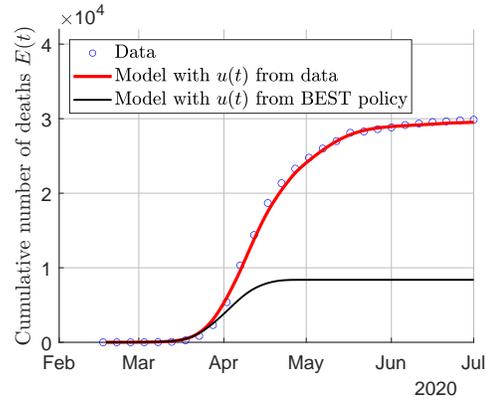}
    \caption{The prediction of the cumulative number of deaths $E(t)$: actual scenario vs. BEST policy.}
    \label{fig:Death-BEST}
\end{figure}

\section{Constant Optimal Strategy for Testing} \label{sec:COST}

Suppose that the total stockpile of tests is limited and given by $r_{\max}\in\bb{N}$, where $\bb{N}$ denotes the set of natural numbers. Then, the problem is to find the optimal testing rate that minimizes the epidemic peak, i.e., the peak value of the undetected infected population $x_\I(t)$. We seek an optimal trade-off because if the testing rate is chosen high, then the stockpile finishes early and leaves us with no tests at hand, which could result in a second wave of the epidemic more intense than the first wave. On the other hand, if the testing rate is chosen low, then it is impossible to limit the epidemic spread (or `flatten the curve'), which could challenge the available medical facilities of the country. 

In this section, we determine a constant optimal daily allocation of a limited stockpile of tests $r_{\max}$ minimizing the epidemic peak. Such an allocation is called the {\em constant optimal strategy for testing} (COST). Given the total stockpile of tests $r_{\max}$,  the constant testing rate is given by
\begin{equation} \label{u-COST}
    u(t) = \left\{ \begin{array}{ll}
          C,    & \text{if}~ 0 \leq t \leq  T \\
          0, & \text{if}~ t>T
    \end{array} \right.
\end{equation}
where the time period $T:=r_{\max} / C$ represent the duration of the testing policy once $C$ is determined. Therefore, the approximate solution \eqref{soln:xI} of undetected infected population is given by
\begin{equation}\label{soln:xI_COST}
x_\I(\xi) = \left\{
\arraycolsep=3pt\ba{ll}
\disp x_\I^0 + x_\S^0 \left(1 - e^{-\frac{\beta}{N}\xi} \right) - \frac{ C \xi}{(1-\theta)N} - \gamma \xi, & \xi < \Xi \\ [0.75em]
\disp x_\I^0 + x_\S^0 \left(1 - e^{-\frac{\beta}{N}\xi} \right) - \frac{ C\Xi}{(1-\theta)N} - \gamma \xi, & \xi \ge \Xi
\ea \right.
\end{equation}
where $\Xi$ denotes the moment in the infection time coordinate when the stockpile of tests finishes. This $\Xi$ is given by an implicit formula
\begin{equation}\label{xi_star_def}
\int\limits_0^{\Xi} \frac{d\xi}{x_\I^0 + x_\S^0 \left(1 - e^{-\frac{\beta}{N}\xi} \right) - \frac{ C \xi}{(1-\theta)N} - \gamma \xi} = T
\end{equation}
which is derived from \eqref{eq:t(xi)} as
\[
\int\limits_0^{\Xi} \frac{d\xi}{x_\I(\xi)} = T
\]
where $T:= r_{\max}/C$ and $x_\I(\xi)$ is given by \eqref{soln:xI} and \eqref{soln:xI_COST}.

\subsection{Computation of the COST policy}

The epidemic spread can be controlled through a suitable testing rate $u(t)=C$, for $t\in[0,T]$. However, when the limited stockpile of tests finishes, the disease may start spreading again, resulting in a second wave of the epidemic. Let the {\it testing phase} be the time interval when the tests are being performed, i.e., $t\in[0,T]$, and let the {\it non-testing phase} be the time after the stockpile finishes, i.e., $t>T$. Thus, it is reasonable to suppose that there can be two epidemic peaks --- one that arrives during the testing phase and the other that arrives during the non-testing phase.

Let $\xi_{p_1}$ and $\xi_{p_2}$ be the moments in infection time coordinate when the two epidemic peaks occur. We will prove that $0 < \xi_{p_1} \leq \Xi \leq \xi_{p_2}$ in the following:. 
If the two epidemic peaks exist, then
\[
\begin{aligned}
\left.\frac{dx_\I}{d\xi}\right|_{\xi=\xi_{p_1}} &= x_\S^0 \frac{\beta}{N} e^{-\frac{\beta}{N}\xi_{p_1}} - \frac{C}{(1-\theta)N} - \gamma = 0 \\
\left.\frac{dx_\I}{d\xi}\right|_{\xi=\xi_{p_2}} &= x_\S^0 \frac{\beta}{N} e^{-\frac{\beta}{N}\xi_{p_2}} - \gamma = 0
\end{aligned}
\]
where the peak times are given as
\[
\xi_{p_1} = \frac{N}{\beta} \ln R_1, \qquad
\xi_{p_2} = \frac{N}{\beta} \ln R_2
\]
with the reproduction numbers
\[
R_1 = \frac{x_\S^0 \beta}{\frac{C}{1-\theta} + \gamma N}, \quad
R_2 = \frac{x_\S^0 \beta}{\gamma N}, \quad R_2>R_1>1.
\]
%To explain this, we suppose the contrary, which implies that the epidemic dies out exponentially instead of spreading. However, this will not develop the herd immunity of the population and, when the limited stockpile of tests $r_{\max}$ finishes, the second peak will arrive once the testing is stopped. Therefore, it is reasonable to develop at least a partial herd immunity using the controlled number of tests $C$ in the first wave of epidemic and keeping $R_1>1$.
Therefore, we have $0< \xi_{p_1} < \xi_{p_2}$. Further, notice that the first peak occurs only if $\xi_{p_1} < \Xi$, otherwise it is ill-defined. Similarly, $\xi_{p_2} > \Xi$.

The values of two epidemic peaks are
\begin{equation}\label{x_I_two_peaks}
\begin{aligned}
x_\I^{p_1} &= x_\I^0 + x_\S^0 \left(1 - \frac{1}{R_1} \right) - x_\S^0 R_1 \ln R_1 \\
x_\I^{p_2} &= x_\I^0 + x_\S^0 \left(1 - \frac{1}{R_2} \right) - x_\S^0 R_2 \ln R_2 - \frac{C \Xi}{(1-\theta)N}
\end{aligned}
\end{equation}
where $x_\I^{p_1} = x_\I(\xi_{p_1})$ and $x_\I^{p_2} = x_\I(\xi_{p_2})$.
It is obvious that $x_\I^{p_1}$ decreases as $C$ increases, because $x_\I^{p_1}$ represents the first epidemic peak during the testing phase. 
Further, it is possible to prove that $C\Xi$ decreases as $C$ increases for every sufficiently large $C$, where $\Xi=\Xi(C)$ is given by the solution of \eqref{xi_star_def}. From \eqref{xi_star_def}, we obtain
%can be rewritten as 
%$$
%C \int\limits_0^{\Xi} \frac{d\xi}{x_\I(\xi)} = r_{\max}.
%$$
%Taking the derivative with respect to $\Xi$ and substituting again \eqref{xi_star_def} in place of the integral, we see that
%$$
%\frac{dC}{d\Xi} \frac{r_{\max}}{C} + C \frac{1}{x_{\I}(\Xi)} = 0
%$$
%which implies
\[
\frac{d\Xi}{dC} = -\frac{r_{\max} x_{\I}(\Xi)}{C^2}.
\]
Since $d(C\Xi)/dC = \Xi + C d\Xi/dC$, therefore
\[
\frac{d(C\Xi)}{dC} = \Xi - \frac{r_{\max} x_{\I}(\Xi)}{C} = \int\limits_0^{T} \left[ x_\I(\tau) - x_\I(T) \right] d\tau
\]
where $\Xi$ is rewritten according to \eqref{eq:xi} and $r_{\max}/C$ is represented by the integral of 1 over the time interval $[0,T]$, where $T = r_{\max}/C$. This implies that $d(C\Xi)/dC<0$ unless $x_\I(T)$ is smaller than the average number of infected people in the time interval $[0,T]$. Thus, for every reasonable, sufficiently large $C$ such that $T$ is not too large, $x_\I^{p_1}$ decreases and $x_\I^{p_2}$ increases. 
This property can be used to minimize the maximum between two epidemic peaks. Indeed, the maximum between a decreasing and an increasing function is minimized when they are equal to each other. Thus, we equate two epidemic peaks in \eqref{x_I_two_peaks} and solve with respect to $\Xi$ in order to obtain the optimality condition
\begin{equation}\label{xi_optimality}
\Xi  = \frac{N}{\beta} \left( 1 + \ln R_1 - \frac{R_1}{R_2 - R_1} \ln \frac{R_2}{R_1} \right)
\end{equation}
which, along with \eqref{xi_star_def}, constitutes a complete system to determine the optimal $C$.

%It is possible to show that the optimal strategy provides switching in between of two peaks $\xi_{p_1}$ and $\xi_{p_2}$, thus both peaks exist. Indeed, first we prove a simple lemma:
%\begin{Lemma} \label{lemma:log_x}
%For any $x \ge 1$
%$$
%x - 1 \ge \ln x \ge 1 - \frac{1}{x},
%$$
%and the equality is possible only if $x = 1$.
%\end{Lemma}
%\begin{proof}
%First of all, it is obvious that for $x = 1$ all three parts of the inequality are equal to zero. Further, let us take the derivative of this inequality:
%$$
%1 \ge \frac{1}{x} \ge \frac{1}{x^2},
%$$
%which is always true for all $x \ge 1$, and it holds strictly for all $x > 1$. This concludes the proof.
%\end{proof}
%Now, consider the difference $\Xi - \xi_{p_1}$:
%$$
%\begin{aligned}
%\frac{\beta}{N}\left(\Xi - \xi_{p_1} \right) &= 1 - \frac{R_1}{R_2 - R_1} \ln \frac{R_2}{R_1} \\
%&= 1 - \frac{1}{x - 1} \ln x \\ &= \frac{1}{x - 1} \left(x - 1 - \ln x\right) \ge 0,
%\end{aligned}
%$$
%where we denote $x = R_2 / R_1$, and the last inequality comes from Lemma~\ref{lemma:log_x}. Thus the first peak is well-defined. Further, writing the difference $\xi_{p_2} - \Xi$ in the same manner and using the same definition for $x$, we see that
%$$
%\begin{aligned}
%\frac{\beta}{N}\left(\xi_{p_2} - \Xi \right) &= \left(\frac{R_1}{R_2 - R_1} + 1\right) \ln \frac{R_2}{R_1}  - 1 \\
%&= \frac{x}{x-1}\ln x - 1 \\ &= \frac{x}{x-1} \left(\ln x - 1 + \frac{1}{x}\right) \ge 0.
%\end{aligned}
%$$
%Thus we have proven that $0 < \xi_{p_1} \le \Xi \le \xi_{p_2}$, which means the stopping of testing happens in between of the two peaks.

\begin{Proposition} \label{prop:COST}
Consider the testing rate $u(t)$ defined by \eqref{u-COST} and assume that $x_\T\approx(1-\theta)N$. Then, the COST policy $C$ is obtained by solving the system of equations \eqref{xi_star_def} and \eqref{xi_optimality} with $T:=r_{\max}/C$. This policy is unique provided that $R_1>1$.
\end{Proposition}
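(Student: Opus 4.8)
The plan is to recast the determination of $C$ as a one–dimensional optimization problem and to prove that its objective, the epidemic peak $P(C):=\sup_{\xi\ge 0}x_\I(\xi)$ evaluated on the approximate solution \eqref{soln:xI_COST}, is strictly quasiconvex in $C$, so that it has a unique minimizer characterized by the equal–peaks condition \eqref{xi_optimality} together with the stockpile constraint \eqref{xi_star_def}. First I would fix the shape of $x_\I(\cdot)$: on the testing phase $\{\xi<\Xi\}$ and on the non-testing phase $\{\xi\ge\Xi\}$ the derivative $dx_\I/d\xi$ is in each case a strictly decreasing function of $\xi$ (because $e^{-\beta\xi/N}$ is), so $x_\I$ is strictly concave on each phase and has at most one interior maximizer per phase, namely $\xi_{p_1}=\tfrac{N}{\beta}\ln R_1$ on the first, which is present exactly when $0<\xi_{p_1}<\Xi$ and in particular forces $R_1>1$, and $\xi_{p_2}=\tfrac{N}{\beta}\ln R_2$ on the second, present exactly when $\xi_{p_2}>\Xi$. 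Since $R_2>R_1>1$ we always have $\xi_{p_1}<\xi_{p_2}$, and whenever both peaks are present $\xi_{p_1}<\Xi<\xi_{p_2}$, which is the ordering $0<\xi_{p_1}\le\Xi\le\xi_{p_2}$ announced before the statement. The hypothesis $R_1>1$ is precisely what makes the first peak, hence the trade-off, nondegenerate: if $R_1\le 1$ the testing rate already renders the epidemic non-spreading on the testing phase, $x_\I$ never exceeds a boundary value, and the optimization is trivial.

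Next I would partition the admissible range of $C$ according to which peaks exist: (A) $C$ small, $\Xi>\xi_{p_2}$, only the first peak, $P(C)=x_\I^{p_1}(C)$; (B) $C$ intermediate, $\xi_{p_1}<\Xi<\xi_{p_2}$, two peaks, $P(C)=\max\{x_\I^{p_1}(C),x_\I^{p_2}(C)\}$; (C) $C$ large, $\Xi<\xi_{p_1}$, only the second peak, $P(C)=x_\I^{p_2}(C)$. Using the closed forms \eqref{x_I_two_peaks} one checks that $x_\I^{p_1}$ is continuous and strictly decreasing in $C$ (a short computation using $dR_1/dC<0$ and $R_1>1$, as already noted in the discussion), while $x_\I^{p_2}$ is continuous and strictly increasing in $C$ because its only $C$-dependence is the term $-C\Xi/((1-\theta)N)$ and $\frac{d(C\Xi)}{dC}=\int_0^T\bigl[x_\I(\tau)-x_\I(T)\bigr]\,d\tau<0$. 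A continuity computation at the phase boundaries then gives $x_\I^{p_1}>x_\I^{p_2}$ at the A/B boundary (where $\Xi=\xi_{p_2}$, since $x_\I$ is decreasing on $(\xi_{p_1},\Xi)$) and $x_\I^{p_2}>x_\I^{p_1}$ at the B/C boundary (where $\Xi=\xi_{p_1}$, since $x_\I$ is increasing on $(\Xi,\xi_{p_2})$). Hence $x_\I^{p_1}-x_\I^{p_2}$ is continuous and strictly decreasing on regime B and has exactly one zero $C^\star$ in its interior; on $(0,C^\star)$ the peak equals the strictly decreasing branch and on $(C^\star,\infty)$ the strictly increasing branch, so $C^\star$ is the unique global minimizer. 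Imposing $x_\I^{p_1}(C^\star)=x_\I^{p_2}(C^\star)$ and simplifying with the definitions of $R_1,R_2,\xi_{p_1},\xi_{p_2}$ yields \eqref{xi_optimality}, which together with \eqref{xi_star_def} (the implicit relation $\Xi=\Xi(C)$ once $T=r_{\max}/C$) forms a closed system whose unique solution is $(C^\star,\Xi(C^\star))$, establishing the claim.

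The step I expect to be the main obstacle is the strict sign of $d(C\Xi)/dC$, equivalently that $x_\I(T)$ lies strictly below the time average of $x_\I$ on $[0,T]$, throughout the relevant range of $C$ and not merely ``for sufficiently large $C$'' as hedged in the discussion preceding the statement. In regime B one has $t_{p_1}<T$, so $x_\I$ is not monotone on $[0,T]$ (it rises to $x_\I^{p_1}$ then falls back to the local minimum $x_\I(T)$), and the comparison with the average is genuinely quantitative. I would handle it by restricting a priori to the natural domain $C\in(0,\bar C)$ with $\bar C=(1-\theta)(\beta x_\S^0-\gamma N)$, the value making $R_1=1$; when $x_\I(T)\le x_\I^0$ the inequality is immediate since the time average strictly exceeds $\min_{[0,T]}x_\I=x_\I(T)$, and in the remaining subcase I would bound $x_\I(T)$ from below against the average using concavity of $x_\I$ on $[t_{p_1},T]$ together with the strict rise on $[0,t_{p_1}]$. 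A cleaner alternative that sidesteps global monotonicity of $x_\I^{p_2}$ is to argue uniqueness directly on the system \eqref{xi_star_def}--\eqref{xi_optimality}: the $\Xi$ produced by \eqref{xi_star_def} is strictly decreasing in $C$, whereas the $\Xi$ prescribed by \eqref{xi_optimality} is non-decreasing in $C$ through its dependence on $R_1$, so the two curves meet at most once. Either route reduces the proposition to elementary monotonicity checks, with $R_1>1$ keeping both branches well defined and strictly monotone.
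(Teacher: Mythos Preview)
Your main line of argument is exactly the one the paper uses: identify the two candidate peaks $x_\I^{p_1}$ and $x_\I^{p_2}$ from \eqref{x_I_two_peaks}, argue that the first is decreasing and the second increasing in $C$, and minimize the maximum by equating them to obtain \eqref{xi_optimality}. The paper's own proof is literally ``follows from the prior discussion'', so your regime decomposition (A)/(B)/(C) and the boundary checks at $\Xi=\xi_{p_1}$ and $\Xi=\xi_{p_2}$ are a genuine elaboration of what the paper leaves implicit. You also correctly flag that the monotonicity of $x_\I^{p_2}$, i.e.\ the sign of $d(C\Xi)/dC$, is only asserted in the paper ``for every reasonable, sufficiently large $C$''; the paper does not close this gap either, so your awareness of it is appropriate.

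One concrete error: your ``cleaner alternative'' at the end does not work as stated. You claim that the $\Xi$ given by \eqref{xi_optimality} is non-decreasing in $C$ through $R_1$. In fact, writing $f(R_1)=1+\ln R_1-\tfrac{R_1}{R_2-R_1}\ln\tfrac{R_2}{R_1}$ and differentiating, one finds
\[
f'(R_1)=\frac{R_2}{R_2-R_1}\Bigl[\frac{1}{R_1}-\frac{1}{R_2-R_1}\ln\frac{R_2}{R_1}\Bigr],
\]
and since $t-1>\ln t$ for $t=R_2/R_1>1$ this is strictly positive. Because $R_1$ is strictly decreasing in $C$, the $\Xi$ from \eqref{xi_optimality} is therefore strictly \emph{decreasing} in $C$, just like the $\Xi$ from \eqref{xi_star_def}. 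Two decreasing curves can certainly cross uniquely, but not by the opposite-monotonicity argument you give; you would need to compare slopes, which is no simpler than the original $d(C\Xi)/dC$ estimate. So drop the alternative and keep your primary equal-peaks route, which is the paper's.
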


The proof of this proposition follows from the prior discussion. The solution $C$ for \eqref{xi_star_def} and \eqref{xi_optimality} can be obtained numerically \cite{niazi2020}.

\subsection{COST Policy for the COVID-19 Case of France}

Similar to the BEST policy, we evaluate the COST policy through its impact on the number of active ICU cases $B(t)$ and the cumulative number of deaths $E(t)$ after fitting the model outputs \eqref{activeICU_cases} and \eqref{total_dead_cases} with the data in the actual testing scenario. However, we fit these equations to the data by using different value of $\theta$ than obtained in the case of BEST policy. For the COST policy, the value of $\theta$ is obtained through model validation with the COVID-19 data of France \cite{niazi2020} under the assumption that $x_\T(t) \approx (1-\theta)N$.

\begin{figure}[!t]
    \centering
    \includegraphics[width=0.4\textwidth]{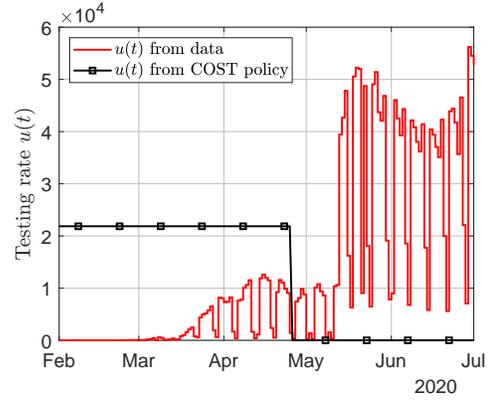}
    \caption{The comparison between the actual control input $u(t)$ versus the proposed control input $u(t)$ required by COST.}
    \label{fig:Input-COST}
\end{figure}

We consider that the total stockpile of tests $r_{\max}=2,038,037$, which is the total number of tests performed from January 24 to July 01, 2020, in France. Then, we obtain the optimal testing rate $C=17,144$ with the terminal time $T=118$ (i.e., May 20) from Proposition~\ref{prop:COST}. The actual testing rate of France and the testing rate required by the COST policy are illustrated in Figure~\ref{fig:Input-COST}. Notice that most of the tests in the actual testing are consumed after May 13, whereas all the tests in COST are allocated equally in the time interval from January 24 to May 20. The number of tests performed per day in the actual testing exceeds $50,000$ per day after May 13, which is about three times more than what is required by COST, i.e., $17,144$. Thus, COST is optimal in a sense that it is practical, it decreases the burden on medical facilities, and it reduces the number of deaths significantly. The impact of COST policy on the number of active ICU cases $B(t)$ and the cumulative number of deaths is illustrated in Figure~\ref{fig:ICU-COST} and Figure~\ref{fig:Death-COST}, respectively. From the figures, we observe that the COST policy reduces the peak of $B(t)$ by $11.12\%$ and the total number of cumulative deaths $E(t)$ by $37.52\%$. 
 
\begin{figure}[!t]
    \centering
    \includegraphics[width=0.4\textwidth]{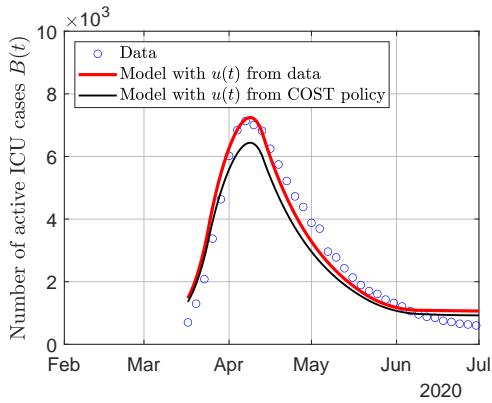}
    \caption{The comparison between the predicted number of active ICU cases $B(t)$ in the actual testing scenario and with COST policy.}
    \label{fig:ICU-COST}
\end{figure}
\begin{figure}[!t]
    \centering
    \includegraphics[width=0.4\textwidth]{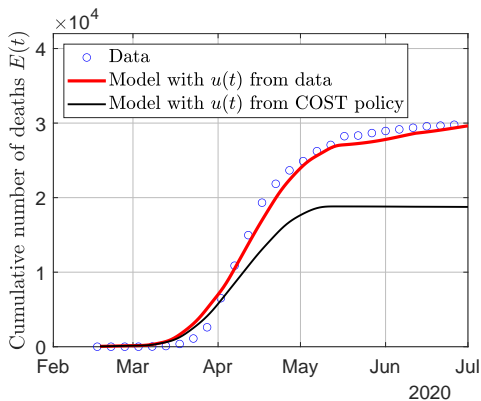}
    \caption{The comparison between the predicted cumulative number of deaths $E(t)$ in the actual testing scenario and with COST policy.}
    \label{fig:Death-COST}
\end{figure}

\section{Conclusion} \label{sec:conclusion}

We proposed a SIDUR model for demonstrating the impact of testing rate in the suppression and/or mitigation of an epidemic. The tests enable the health authority to diagnose and isolate the infected people, which limits the spread of disease to the susceptible population. Two testing policies have been proposed: 1)~best-effort strategy for testing (BEST) and 2)~constant optimal strategy for testing (COST). The BEST policy is a suppression policy that provides the minimum testing rate to be implemented from a certain day onward to stop the growth of undetected infected cases immediately. On the other hand, the COST policy is a mitigation policy that considers a limited stockpile of tests and optimally allocates the tests in a time interval so that the peak of the epidemic is minimized.
Although both BEST and COST policies are easy to compute and implement, they propose constant testing rates, which may limit the influence of testing if the epidemic parameters change due to external conditions. In the future, therefore, it will be interesting to devise a testing policy with a dynamic testing rate that minimizes the epidemic peak and/or the cumulative number of infected cases.

% Generated by IEEEtran.bst, version: 1.14 (2015/08/26)

\end{document}